\renewcommand{\(}{\left(}
\renewcommand{\)}{\right)}
\renewcommand{\[}{\left[}
\renewcommand{\]}{\right]}
\newcommand{\abs}[1]{\left\lvert#1\right\rvert}
\newcommand{\norm}[1]{\left\lVert#1\right\rVert}
\newcommand{\st}{\:|\:}
\newcommand{\C}{{\mathbb{C}}}
\newcommand{\R}{{\mathbb{R}}}
\newcommand{\e}{\varepsilon}
\renewcommand{\phi}{\varphi}
\newcommand{\p}{\rho}
\renewcommand{\H}{{\mathcal{H}}}
\newcommand{\BH}{{\mathcal{B}}(\H)}
\theoremstyle{plain}
\newtheorem{thm}{Theorem}[section]
\newtheorem{lem}[thm]{Lemma}
\newtheorem{assumption}[thm]{Assumption}
\theoremstyle{definition}
\newtheorem{defn}[thm]{Definition}
\theoremstyle{remark}
\title{The Weyl Transform of a measure}
\author{Mansi Mishra and M.~K.~Vemuri}
\address{Department of Mathematical Sciences, IIT(BHU), Varanasi 221005.}
\date{\today}
\begin{document}

\begin{abstract}
(1) Suppose $\mu$ is a smooth measure on a hypersurface of positive Gaussian
curvature in $\R^{2n}$.  If $n\ge 2$, then $W(\mu)$, the Weyl transform of
$\mu$, is a compact operator, and if $p>n\ge 6$ then $W(\mu)$ belongs
to the $p$-Schatten class.

(2) There exist Schatten class operators with
linearly dependent quantum translates.
\end{abstract}

\keywords{Curvature; Heisenberg group; Schatten class.}
\subjclass[2010]{22D10, 22E30, 43A05, 43A80, 47B10}

\maketitle
\thispagestyle{empty}

\section{Introduction}
Suppose $S$ is a smooth hypersurface in $\R^n$, $n \geq 2$,
whose Gaussian curvature is nonzero everywhere.
By a {\em smooth measure on $S$} we mean a measure of the form
$\mu=\psi\sigma$ where $\sigma$ is the measure on $S$ induced by
the Lebesgue measure on $\R^n$ and $\psi$ is a smooth function
on $\R^n$ whose support intersects $S$ in a compact set.
%% Fix a function
%% $\psi \in  C^\infty_0(\R^n)$ whose support intersects $S$ in a compact
%% set.  Let $\sigma$ denote the measure on $S$ induced by
%% the Lebesgue measure on $\R^n$ and let $\mu = \psi \sigma$.
In Stein's book \cite[p348, Theorem 1]{MR1232192}
(see also \cite[Theorem 7.7.14]{MR1065993}),
it is proved that for such a measure,
\begin{equation*}
\abs{\widehat{\mu}(\xi)} \leq A \norm{\xi}^{(1-n)/2},
\end{equation*}
where $\widehat{\mu}$ denotes the Fourier transform of the measure $\mu$.
A brief history of this problem is given in Stein's book.
It follows from this inequality that, for $p > 2n/{(n-1)}$,
\begin{equation}\label{E:Lp}
\widehat{\mu} \in L^p(\R^n).
\end{equation}

%% In this paper, we prove an analogue of this result for the Weyl transform.

Let $\H = L^2\(\R^n\)$, and $\BH$ the set of bounded operators on $\H$.
If $f \in L^1(\R^{2n})$, the {\em Weyl Transform} of $f$ is the operator 
$W(f) \in \BH$ defined by 
\begin{equation*}
(W(f)\phi)(t)
=
\iint f(x,y) e^{\pi i(x \cdot y+2y \cdot t)} \phi(t+x) \, dx \, dy.  
\end{equation*}
More generally, if $\lambda$ is a finite measure on $\R^{2n}$,
the {\em Weyl Transform} of $\lambda$ is the operator $W(\lambda) \in \BH$ 
defined by 
\begin{equation*}
(W(\lambda)\phi)(t)
=
\int e^{\pi i(x \cdot y+2y \cdot t)} \phi(t+x) \, d\lambda(x,y).  
\end{equation*}

The Weyl transform is closely related to the group Fourier transform
on the Heisenberg group.
Recall that the Heisenberg group $G$ is the set of triples
\begin{equation*}
\{(x,y,z) | x,y \in \R^n, z \in \C, \abs{z} = 1\}
\end{equation*}
with multiplication defined by
\begin{equation*}
(x,y,z)(x',y',z')=\(x+x', y+y', zz'e^{\pi i (x \cdot y'-y \cdot x')}\).
\end{equation*}
According to the {\em Stone-von Neumann Theorem}, there is a unique irreducible
unitary representation $\rho$ of $G$ such that
\begin{equation*}
\rho(0,0,z)= zI.
\end{equation*}
The standard realization of this representation is on the Hilbert space $\H$ by
the action
\begin{equation}\label{E:schrodinger}
\(\rho(x,y,z)\phi\)(t)= ze^{\pi i (x\cdot y+2y\cdot t)}\phi(t+x).
\end{equation}
Thus, the Weyl transform of $\lambda$ may be expressed as
\begin{equation}\label{E:weyl}
W(\lambda)
=
\int \rho(x,y,1) \, d\lambda(x,y),  
\end{equation}
where the integral is the weak integral as defined in
\cite[Definition 3.26]{MR1157815}.
Let $S^{p}(\H)$ denote the $p^{\mathrm{th}}$ Schatten class of $\H$.
There are analogues of the Plancherel theorem and the Hausdorff-Young
theorem for the Weyl transform.  Namely, $W$ extends to an isometric
isomorphism $L^2(\R^{2n})\to S^2(\H)$, and to a bounded linear map
$L^p(\R^{2n}) \to S^{p'}$ where $1 \le p \le 2$ and $1/p+1/p'=1$
(see e.g. \cite[Theorem 1.30]{MR983366} and \cite[Theorem 1.3.3]{MR1633042}).

%% If $X \in S^2(\H)$, the modified Fourier-Wigner transform of $X$ is the function
%% $\alpha(X): \R^{2n} \to \C$ defined by 
%% \begin{equation*}
%% \alpha (X)(x,y) = \tr(X\rho(x,y,1)^{-1}).
%% \end{equation*}
%% It is well known (see e.g. \cite{MR983366}) that if $X \in S^1(\H)$ and
%% $\alpha(X) \in L^1(\R^{2n})$ then $W(\alpha(X)) = X$, and that if
%% $f \in L^1(\R^{2n})$ and $W(f) \in S^1(\H)$ then $\alpha(W(f)) = f$.
%% Thus the modified Fourier-Wigner transform is the inverse of the Weyl
%% transform.  There are also analogues of the Plancherel theorem and
%% the Hausdorff-Young theorem for the Weyl transform (see \cite{MR1633042}).

The main result of this paper is the following theorem, which is an analogue of
Equation (\ref{E:Lp}) for the Weyl transform, and is proved in
Section \ref{S:proof}.
\begin{thm}\label{T:main-thm}
Suppose $S$ is a compact connected hypersurface in $\R^{2n}$, $n \geq 2$,
whose Gaussian curvature is positive everywhere.  %% Let $\sigma$ denote the
%% measure on $S$ induced by the Lebesgue measure on $\R^{2n}$.  Let
%% $\psi$ be a smooth function on $\R^{2n}$ and 
Let $\mu$ be a smooth measure on $S$.
Then $W(\mu)$ is a compact operator.  Moreover, when $n\geq 6$ and $p > n$, 
\begin{equation*}
W(\mu) \in S^{p}(\H).
\end{equation*}
\end{thm}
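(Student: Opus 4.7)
The plan is to approximate $\mu$ by smooth compactly supported functions $\mu_\e = \mu * \psi_\e$, where $\psi \in C_c^\infty(\R^{2n})$ is a non-negative approximate identity with $\int \psi = 1$, prove a uniform Schatten bound on $W(\mu_\e)$, and pass to the limit. By splitting the density of $\mu$ into real-imaginary and then positive-negative parts (the latter via a smooth non-negative bump), I may assume throughout that $\mu \ge 0$.

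For each smooth $\mu_\e$ and each integer $m \ge 1$, the composition rule $W(f_1)W(f_2) = W(f_1 \times f_2)$ (with $\times$ denoting twisted convolution), the identity $W(f)^* = W(\tilde f)$ where $\tilde f(x,y) = \overline{f(-x,-y)}$, and the trace formula $\tr W(g) = g(0,0)$ for Schwartz $g$ together yield
\begin{equation*}
\norm{W(\mu_\e)}_{S^{2m}}^{2m} = \tr\bigl((W(\mu_\e)^* W(\mu_\e))^m\bigr) = (\tilde\mu_\e \times \mu_\e)^{\times m}(0,0).
\end{equation*}
The trace formula itself follows from computing the integral kernel of $W(g)$ and restricting to the diagonal.

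Next, dominating twisted by ordinary convolution via $\abs{f_1 \times f_2} \le \abs{f_1} * \abs{f_2}$, iterating, and using Fourier inversion together with $\widehat{\tilde g * g} = \abs{\hat g}^2$, I obtain
\begin{equation*}
(\tilde\mu_\e \times \mu_\e)^{\times m}(0,0) \le (\tilde\mu_\e * \mu_\e)^{*m}(0,0) = \int_{\R^{2n}} \abs{\widehat{\mu_\e}(\xi)}^{2m}\, d\xi \le \int_{\R^{2n}} \abs{\hat\mu(\xi)}^{2m}\, d\xi,
\end{equation*}
the final inequality using $\abs{\widehat{\psi_\e}} \le \widehat{\psi_\e}(0) = 1$ for $\psi \ge 0$. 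Stein's estimate $\abs{\hat\mu(\xi)} \le A(1+\abs\xi)^{(1-2n)/2}$ makes this integral finite whenever $m(2n-1) > 2n$; for $n \ge 2$ the choice $m = 2$ already suffices.

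Finally, $\mu_\e \to \mu$ weakly as measures and the matrix coefficient $(x,y) \mapsto \<\rho(x,y,1)\phi, \eta\>$ is continuous and bounded for each $\phi, \eta \in \H$, so $W(\mu_\e) \to W(\mu)$ in the weak operator topology. Lower semicontinuity of the Schatten norm under this convergence then yields $W(\mu) \in S^{2m}$ with the same bound. Taking $m = 2$ gives $W(\mu) \in S^4$, which is compact, establishing the first assertion for all $n \ge 2$; since $S^4 \subset S^p$ for $p \ge 4$ and the hypothesis $p > n \ge 6$ forces $p > 4$, the same $m = 2$ bound delivers the second assertion. The main obstacle I anticipate is a rigorous justification of the trace identity and of the iterated pointwise bound $\abs{(\tilde\mu_\e \times \mu_\e)^{\times m}} \le (\tilde\mu_\e * \mu_\e)^{*m}$, together with the positivity reduction; the weak-operator lower-semicontinuity step, while standard, should also be recorded carefully.
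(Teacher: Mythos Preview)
Your argument is correct and takes a genuinely different route from the paper's proof; in fact it yields a stronger conclusion.

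The paper proceeds geometrically: it slices $S$ by the fibres of the projection $\Pi_1:(x,y)\mapsto x$, uses the coarea formula to write $W(\mu)$ as an integral operator whose kernel is the Fourier transform of the slice measure $\eta_x$, and then invokes the $(n-1)$-dimensional curvature decay $\abs{\widehat{\eta_x}(t)}\lesssim\norm{t}^{(1-n)/2}$ together with a Schur-type bound to obtain compactness. For the Schatten statement the paper differentiates the kernel twice (losing two powers of decay), shows that $H\,W(\mu)$ is compact, and uses $H^{-1}\in S^p$ for $p>n$; this is why the restriction $n\ge 6$ appears.

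Your approach bypasses the slicing entirely. The chain
\[
\norm{W(\mu_\e)}_{S^{2m}}^{2m}=(\tilde\mu_\e\times\mu_\e)^{\times m}(0)\le(\tilde\mu_\e*\mu_\e)^{*m}(0)=\int_{\R^{2n}}\abs{\widehat{\mu_\e}}^{2m}\le\int_{\R^{2n}}\abs{\widehat{\mu}}^{2m}
\]
uses the full $(2n-1)$-dimensional Stein decay rather than the $(n-1)$-dimensional decay of slices, and this is exactly why you get $W(\mu)\in S^4$ for every $n\ge 2$ instead of $S^p$ for $p>n\ge 6$. The flagged steps are all routine: the positivity reduction is legitimate because any real $\psi\in C_c^\infty$ can be written as $(\psi+M\chi)-M\chi$ with both summands smooth and nonnegative; the trace identity $\tr W(g)=g(0,0)$ for Schwartz $g$ follows by integrating the kernel along the diagonal; the pointwise domination $\abs{f_1\times\cdots\times f_k}\le\abs{f_1}*\cdots*\abs{f_k}$ is immediate since the twisting phase has modulus one; and weak-operator lower semicontinuity of $\norm{\cdot}_{S^p}$ follows from the duality $\norm{T}_{S^p}=\sup\{\abs{\tr(TS)}:\text{rank}\,S<\infty,\ \norm{S}_{S^{p'}}\le1\}$ together with the fact that $\tr(T_\e S)$ is a finite sum of matrix coefficients.

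What each approach buys: the paper's kernel description is more explicit and ties the problem to the geometry of the slices $S_x$, which may be useful for refinements depending on finer curvature data. Your approach is shorter, avoids the harmonic-oscillator trick and the restriction $n\ge 6$, and in fact establishes the paper's conjecture $W(\mu)\in S^p$ for $p>4n/(2n-1)$ at all even integers $p=2m$, hence for all $p\ge 4$ by inclusion.
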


We conjecture that $W(\mu) \in S^p(\H)$ if $p > 4n/{(2n-1)}$, with no
restriction on $n$.
We cannot do better because of the following example.
Let $\mu_r$ denote the normalized surface measure on the sphere
$\abs{z}=r$ in $\C^n$.  In \cite[Proposition 4.1]{MR1133375}, Thangavelu
proved that
\begin{equation*}
W(\mu_r)=\sum_{k=0}^{\infty} \frac{k!(n-1)!}{(k+n-1)!}\phi_k (r) P_k ,
\end{equation*}
where $\phi_k$ are the Laguerre functions of type $(n-1)$ and
$P_k$ is the orthogonal projection of $\H$ onto the $k^{\mathrm{th}}$ eigenspace
of the harmonic oscillator.  It follows from the asymptotic properties of
the Laguerre functions (see \cite[Theorem 8.22.1]{MR0000077}) that,
\begin{equation*}
W(\mu_r) \in S^{p}(\H)
\end{equation*}
if and only if $p > 4n/{(2n-1)}$.

In \cite[Example 2.4]{MR536947}, Edgar and Rosenblatt proved that there exists a
function in $L^p(\R^n)$, $n \geq 2$, that has linearly dependent translates
when $p > 2n/(n-1)$.
%% In Section \ref{S:distributions}, the definition of the Weyl
%% transform and the inverse Weyl transform are extended to appropriate spaces of
%% distributions.
In Section \ref{S:application},
%%  the results of
%% Section \ref{S:distributions}, together with
Theorem \ref{T:main-thm} is applied to prove an analogue of the result in
\cite{MR536947} to quantum translates of a Schatten class operator.

\section{The proof}\label{S:proof}
Let $S$ be a compact connected hypersurface in $\R^{2n}$.
Assume that $S$ has a positive Gaussian curvature everywhere.
Observe that this actually implies that $S$ has positive definite
second fundamental form, i.e., $S$ is strictly convex
(see \cite[p122, Theorem 2.1]{MR707850}).

Let $\Pi_i : S \to \R^n$, $i=1,2$, be the projection maps on $S$ defined by 
\begin{equation*}
\begin{aligned}
\Pi_1(x_1, \dots, x_n, y_1, \dots, y_n)
=&\; (x_1, \dots, x_n), \quad \text{and}\\ 
\Pi_2(x_1, \dots, x_n, y_1, \dots, y_n)
=&\; (y_1, \dots, y_n).  
\end{aligned}
\end{equation*}

We will first prove Theorem \ref{T:main-thm} under the following
additional assumption.

\begin{assumption}\label{A:assumption}
The support of $\mu$ is contained in the set of regular points of
$\Pi_1$.
\end{assumption}

Let $U_1$ denote the set of regular points of $\Pi_1$ and
$$
W_1=\{x \in \Pi_1(S) \st \text{$x$ is a regular value for $\Pi_1$}\}.
$$
Then $W_1$ is open by the implicit function theorem.
Observe that $\Pi_1(U_1)=W_1$.  Indeed, if $x$ is the image under
$\Pi_1$ of a regular point of $\Pi_1$, then $x$ cannot be the image
of a critical point of $\Pi_1$ because $S$ is strictly convex.

Let $S_x=\Pi_1^{-1}\{x\}$.  We need the following lemma, whose proof
is given in the appendix.

\begin{lem}\label{L:sff}
Whenever $x\in W_1$, the set $\Pi_2(S_x)$ is a
smooth hypersurface in $\R^n$ whose Gaussian curvature is bounded below
by a positive constant independent of $x$.
\end{lem}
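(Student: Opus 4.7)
The plan is to realize $\Pi_2(S_x)$ as an affine slice of $S$ and compare its second fundamental form with that of $S$. Since $x\in W_1$, every point $p=(x,y_0)\in S_x$ is a regular point of $\Pi_1$, so by the implicit function theorem $S_x$ is a smooth compact $(n-1)$-submanifold lying inside the affine $n$-plane $\{x\}\times\R^n$; because $\Pi_2$ restricted to this plane is an isometry onto $\R^n$, the image $\Pi_2(S_x)$ is a smoothly embedded hypersurface in $\R^n$.

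For the curvature bound I would work locally. Near $p$, write $S=\{F=0\}$ with $\nabla F\neq 0$; regularity of $\Pi_1$ at $p$ is equivalent to $\partial_y F(p)\neq 0$. Taking inward orientations so that both second fundamental forms are positive definite,
\begin{equation*}
\mathrm{II}_S(v,v)=\frac{v^T D^2 F\, v}{|\nabla F|}, \qquad
\mathrm{II}_{\Pi_2(S_x)}(w,w)=\frac{w^T D_y^2 F\, w}{|\partial_y F|}.
\end{equation*}
For $v=(0,w)$ with $w\in T_{y_0}\Pi_2(S_x)$, one has $(0,w)\in T_pS$ and $v^T D^2 F\, v=w^T D_y^2 F\, w$, hence
\begin{equation*}
\mathrm{II}_{\Pi_2(S_x)}(w,w)=\frac{|\nabla F|}{|\partial_y F|}\,\mathrm{II}_S\bigl((0,w),(0,w)\bigr).
\end{equation*}
Since $\Pi_2|_{\{x\}\times\R^n}$ is an isometry, the induced Euclidean metric on $T_{y_0}\Pi_2(S_x)$ matches the induced metric on $V:=\{0\}\times T_{y_0}\Pi_2(S_x)\subset T_pS$, so, computing in an orthonormal basis of $V$,
\begin{equation*}
K_{\Pi_2(S_x)}(y_0)=\left(\frac{|\nabla F|}{|\partial_y F|}\right)^{n-1}\det\bigl(\mathrm{II}_S|_V\bigr).
\end{equation*}

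The first factor is at least $1$ because $|\nabla F|^2=|\partial_x F|^2+|\partial_y F|^2$. For the second, strict convexity makes $\mathrm{II}_S$ positive definite everywhere on $S$, and compactness of $S$ bounds its smallest principal curvature from below by some constant $c>0$; Courant--Fischer then gives that the smallest eigenvalue of $\mathrm{II}_S|_V$ is at least $c$, so $\det(\mathrm{II}_S|_V)\geq c^{n-1}$. Combining yields $K_{\Pi_2(S_x)}(y_0)\geq c^{n-1}$, uniformly in $x$ and $y_0$. The step that most needs care is the metric and orientation bookkeeping in the comparison of the two second fundamental forms: one must confirm that $V$ and $T_{y_0}\Pi_2(S_x)$ are isometrically identified by $\Pi_2$, so that the determinant computed on $V$ genuinely equals (up to the scalar factor) the Gaussian curvature of $\Pi_2(S_x)$.
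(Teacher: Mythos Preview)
Your proof is correct and follows essentially the same route as the paper: both identify $\Pi_2(S_x)$ with the transversal slice $S\cap(\{x\}\times\R^n)$, establish the pointwise relation $\mathrm{II}_{\text{slice}}=(1/\cos\theta)\,\mathrm{II}_S|_V$ (your factor $|\nabla F|/|\partial_y F|$ is exactly $1/(\vec n_M\cdot\vec n_N)$ in the paper's notation), and conclude the uniform bound $K_{\Pi_2(S_x)}\ge c^{n-1}$ with $c$ the minimum principal curvature of $S$. The only cosmetic difference is that the paper derives the relation via a coordinate-free decomposition of the vector-valued second fundamental form, while you compute it from a local defining function and its Hessian.
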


Let $\mathcal{J}_{\Pi_1}$ denote the normal Jacobian of $\Pi_i$, i.e.,
the absolute value of the determinant of $d\Pi_1$ restricted to the
orthogonal complement of its kernel.  Then $\mathcal{J}_{\Pi_1}$ is
%strictly positive on the set of regular points of $\Pi_1$
strictly positive on $U_1$ and hence
has a positive minimum on the support of $\mu$.

By the coarea formula (see \cite[p159]{MR2229062}), we have
\begin{equation*}
\begin{split}
(W(\mu)\phi)(t)
& = 
\int_{S} e^{\pi i(x \cdot y+2y \cdot t)} \phi(t+x) \,d\mu(x,y)\\
& = 
\int_{S} e^{\pi i(x \cdot y+2y \cdot t)}\phi(t+x)\psi(x,y)\,d\sigma(x,y)\\
& = 
\int_{\R^n} \phi(t+x) \int_{S_x}
\frac{e^{\pi i(x +2t) \cdot y}\psi(x,y)}{\mathcal{J}_{\Pi_1}(x,y)}
\,d\sigma_x(y)\,dx,
\end{split}
\end{equation*}
where $\sigma_x$ denotes the volume measure on $S_x$.
Let $\psi_1 = \psi / \mathcal{J}_{\Pi_1}$.  Then $\psi_1$ is a compactly
supported smooth function on $U_1$.
Observe that
\begin{equation*}
\int_{S_x} \frac{e^{\pi i(x +2t) \cdot y}\psi(x,y)}{\mathcal{J}_{\Pi_1}(x,y)}
\,d\sigma_x(y) =
\widehat{\eta_x}(x+2t),
\end{equation*}
where $\eta_x = {\Pi_2}_* \(\psi_1(x,\cdot) \, d\sigma_x\)$ is the pushforward
of the measure $\psi_1(x,\cdot) \, d\sigma_x$ by $\Pi_2$, which is a smooth
measure supported on $\Pi_2(S_x)$.  Therefore, by Lemma \ref{L:sff} and
\cite[Theorem 7.7.14]{MR1065993}, there exists a constant $C$ independent of 
$x$ such that
\begin{equation*}
\abs{\widehat{\eta_x}(t)}\leq C {\norm{t}}^{(1-n)/2}.
\end{equation*}
Also, since the measure $\eta_x$ is nonzero only when $x\in \Pi_1(S)$, it
follows that $\widehat{\eta_x}(t)$ is nonzero only when $x\in \Pi_1(S)$.

Thus $W(\mu)$ is an integral operator given by the kernel
\begin{equation*}
k(t,u) = \widehat{\eta_{u-t}}(u+t),
\end{equation*}
and the kernel $k$ satisfies
\begin{equation}\label{eq:kernel}
\abs{k(t,u)}\leq C {\norm{t+u}}^{(1-n)/2}
\end{equation}
and $k(t,u)=0$ if $\norm{t-u}>R$ for some constants $C$ and $R$.  We
will now show that such an integral operator is compact.

\begin{lem}\label{L:conv}
Let $R >0$ and $\alpha <0$.  If $f \in L^1_{loc} (\R^n)$,
$f(t)=O({\norm{t}}^\alpha)$ as $\norm{t} \rightarrow \infty$ and $g=f*\chi_{B_R}$
then $g(t)=O({\norm{t}}^\alpha)$ as $\norm{t} \rightarrow \infty$.
\end{lem}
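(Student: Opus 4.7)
The plan is to estimate $g(t)$ pointwise for large $\norm{t}$ by directly exploiting the fact that the domain of integration $B_R$ is bounded. Since $\chi_{B_R}$ has compact support, for $\norm{t}$ large every point $t-s$ with $s\in B_R$ lies far from the origin, so the decay hypothesis on $f$ applies uniformly in $s$.

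Concretely, I would first fix $M>0$ and $C_1>0$ such that $\abs{f(u)} \leq C_1 \norm{u}^{\alpha}$ whenever $\norm{u} \geq M$, which is the content of $f(u) = O(\norm{u}^{\alpha})$. Then, for $\norm{t} \geq 2(R+M)$ and any $s \in B_R$, the triangle inequality gives
\begin{equation*}
\norm{t-s} \;\geq\; \norm{t} - R \;\geq\; \max\!\bigl(M,\, \tfrac{1}{2}\norm{t}\bigr).
\end{equation*}
Because $\alpha<0$, raising to the power $\alpha$ reverses this inequality, so $\norm{t-s}^{\alpha} \leq (\norm{t}/2)^{\alpha} = 2^{-\alpha}\norm{t}^{\alpha}$.

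Writing $g(t) = \int_{B_R} f(t-s)\,ds$, combining the two bounds yields
\begin{equation*}
\abs{g(t)} \;\leq\; \int_{B_R} \abs{f(t-s)}\,ds \;\leq\; C_1\, 2^{-\alpha}\, \abs{B_R}\, \norm{t}^{\alpha}
\end{equation*}
for all $\norm{t} \geq 2(R+M)$, which is exactly $g(t) = O(\norm{t}^{\alpha})$. The assumption $f \in L^1_{loc}(\R^n)$ ensures the convolution integral is well-defined for every $t$, but plays no role in the asymptotic estimate itself. There is no real obstacle here; the only subtlety is remembering that $\alpha<0$ flips the inequality when bounding $\norm{t-s}^\alpha$ from above by $\norm{t}^\alpha$ up to a multiplicative constant.
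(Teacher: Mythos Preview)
Your proof is correct and follows essentially the same strategy as the paper's: bound $\abs{g(t)}$ by $\int_{B_R}\abs{f(t-s)}\,ds$ and then control $\norm{t-s}^{\alpha}$ uniformly for $s\in B_R$ and large $\norm{t}$. The only difference is cosmetic: the paper compares $\norm{t-u}^{\alpha}$ with $\norm{t}^{\alpha}$ via the mean value theorem, whereas your triangle-inequality bound $\norm{t-s}\ge \norm{t}/2$ together with the monotonicity of $r\mapsto r^{\alpha}$ is arguably cleaner and yields the same conclusion.
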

\begin{proof}
Since $f(t)=O({\norm{t}}^\alpha)$ as $\norm{t} \rightarrow \infty$,
there exists a constant $C$ such that
\begin{equation*}
\abs{f(t)} \leq C \norm{t}^\alpha
\end{equation*}
for large enough $t$.  Then 
\begin{equation*}
\begin{split}
\abs{g(t)} & = \abs{\int f(t-u)\chi_{B_R}(u) \,du}\\
&  
\leq \int_{B_R} \abs{f(t-u)} \,du\\
& 
\leq C \int_{B_R} \norm{t-u}^\alpha \,du.\\
\end{split}
\end{equation*}
It follows from the mean value theorem that there exists
$\lambda \in [0,1]$ such that
\begin{equation*}
\abs{\norm{t-u}^\alpha - \norm{t}^\alpha}
\leq
\alpha \norm{t-\lambda u}^{\alpha -1} \norm{u}.
\end{equation*}
Therefore 
\begin{equation*}
\norm{t-u}^\alpha \leq
\norm{t}^\alpha + \alpha R \(\norm{t}-\lambda R\)^{\alpha -1}.
\end{equation*}
Hence, for large enough $t$, there exists a constant $C'$ such that 
\begin{equation*}
\abs{g(t)} \leq C' \norm{t}^\alpha.
\end{equation*}
\end{proof}
Observe that
\begin{equation*}
\begin{split}
\int \abs{k(t,-u)}\, dt & = \int_{B_r(-u)} \abs{k(t,-u)}\, dt\\
& 
\leq C \int_{B_r(-u)} \norm{u-t}^{(1-n)/2} \,dt\\
&
\leq C \int \chi_{B_r(-u)}(t) \norm{u-t}^{(1-n)/2} \,dt\\
&
= C \norm{\cdot}^{(1-n)/2} * \chi_{B_r(-u)}.
\end{split}
\end{equation*}
Therefore by Lemma \ref{L:conv}, for large enough $u$,
\begin{equation*}
\int\abs{k(t,-u)} \, dt \leq C' \norm{u}^{(1-n)/2}.
\end{equation*}
Hence
\begin{equation*}
\lim_{\norm{u} \rightarrow \infty} \int \abs{k(t,u)}\,dt = 0.
\end{equation*}
Similarly, we can conclude that 
\begin{equation*}
\lim_{\norm{t} \rightarrow \infty} \int \abs{k(t,u)}\,du = 0.
\end{equation*}
Compactness of $W(\mu)$ follows from the generalized Young's inequality
(see \cite[p9]{MR1357411}).

Recall that the Laplacian is the differential operator
\begin{equation*}
\Delta =\sum_{i=0}^n\frac{\partial^2}{\partial t^2_i} .
\end{equation*}
\begin{defn}
The harmonic oscillator is the differential operator  
\begin{equation*}
H = -\Delta + \norm{t}^2.
\end{equation*}
\end{defn}
The Hermite functions $\Phi_\alpha$,
for $\alpha = (\alpha_1, \dots, \alpha_n)$ a multi-index,
are the eigenfunctions of the harmonic oscillator corresponding to the
eigenvalues $(2\abs{\alpha} +n)$, where $\abs{\alpha} = \sum_{j=1}^n \alpha_j$
(see \cite[p18]{MR1633042}).
Therefore, for $p > n$, 
\begin{equation}
H^{-1} \in S^p(\H).
\label{eq:ho}
\end{equation}
It follows from the dominated convergence theorem that
\begin{equation*}
H(W(\mu)\phi)(t) = \int (-\Delta_t + \norm{t}^2) k(t,u) \phi(u)\, du.
\end{equation*}

Let $i_x : S_x \to U_1$ denote the inclusion map.  Observe that there exists
a differential form $\omega$ on $U_1$ such that the measure on $S_x$
induced by $i_x^* \omega$ is $\sigma_x$, where
$i_x^*\omega$ denotes the pullback of $\omega$ by $i_x$.
%% Then 
%% \begin{equation*}
%% \int_{S_x} e^{\pi i (x+2t)\cdot y} \psi_1(x,y)\,d\sigma_x
%% =
%% \int_{S_x} e^{\pi i (x+2t)\cdot y} \psi_1(x,y)\, \omega.
%% \end{equation*}  

%% Since $\frac{\partial}{\partial x_j}$ is a vector field on $\Pi_1(V_1)$,

Fix a point $x \in W_1$.  Let $\tau \in C_0^\infty(W_1)$ be identically one
in a neighborhood of $x$.  There exists a unique vector field $X_j$ on $U_1$
such that $X_j \perp \mathrm{ker}(d\Pi_1)$ and 
$d\Pi_1(X_j) = \tau \frac{\partial}{\partial x_j}$.  Clearly
the support of $X_j$ is a compact subset of $U_1$, and so there is a
one parameter group $\phi_s$ of diffeomorphisms of $U_1$ generated by
$X_j$.
Then, at the point $x$, we have
\begin{equation*}
\begin{split}
\frac{\partial}{\partial x_j} \int_{S_x} e^{\pi i (x+2t)\cdot y} \psi_1(x,y)\,d\sigma_x
&=
\frac{\partial}{\partial x_j} \int_{S_x} e^{\pi i (x+2t)\cdot y} \psi_1(x,y)\, \omega\\
&=
\int_{S_x}\frac{d}{ds}\bigg|_{s=0} \phi_s^*
(e^{\pi i (x+2t)\cdot y} \psi_1(x,y)\, \omega)\\
&=
\int_{S_x} \mathcal{L}_{X_j} \(e^{\pi i (x+2t)\cdot y} \psi_1(x,y)\, \omega\),
\end{split}
\end{equation*}
where $\mathcal{L}_{X_j}(\nu)$ denotes the Lie derivative of the differential
form $\nu$ with respect to the vector field $X_j$.
We may write
$
X_j = \tau(x) \frac{\partial}{\partial x_j} +
a_{1j}(x,y) \frac{\partial}{\partial y_1} + \cdots +
a_{nj}(x,y) \frac{\partial}{\partial y_n}
$,
where $a_{1j}, \dots, a_{nj} \in C_0^{\infty}(U_1)$.
Since $i_x^* \omega$ is a non-vanishing $(n-1)$-form on the $(n-1)$-dimensional
manifold $S_x$, we may write
$i_x^* \mathcal{L}_{X_j}\omega = f(x,y) i_x^* \omega$ for some
$f \in C^{\infty}(U_1)$.  Since $X_j$ is compactly supported, in fact
$f\in C_0^{\infty}(U_1)$.  Therefore
\begin{equation*}
\begin{aligned}
 &\; i_x^*\mathcal{L}_{X_j} \[e^{\pi i (x+2t)\cdot y} \psi_1(x,y)\, \omega\]\\
=&\;
     \(X_je^{\pi i (x+2t)\cdot y}\) \psi_1(x,y) i_x^*\omega
     + e^{\pi i (x+2t)\cdot y} \(X_j\psi_1\)(x,y) i_x^*\omega
     + e^{\pi i (x+2t)\cdot y} \psi_1(x,y)i_x^*\mathcal{L}_{X_j}\omega\\
=&\;
     \pi i \(\tau(x) y_j + \sum_{k=1}^n(x_k + 2t_k) a_{kj}(x,y)\)
     e^{\pi i (x+2t)\cdot y}\psi_1(x,y)\,d\sigma_x
     + e^{\pi i (x+2t)\cdot y}\(X_j\psi_1\)(x,y)\,d\sigma_x\\
&\;  + e^{\pi i (x+2t)\cdot y}\psi_1(x,y)f(x,y)\,d\sigma_x\\ 
=&\;
     \pi i \sum_{k=1}^n(x_k + 2t_k) a_{kj}(x,y)
     e^{\pi i (x+2t)\cdot y}\psi_1(x,y)\,d\sigma_x\\
&\;  + e^{\pi i (x+2t)\cdot y}
     \[\pi i\tau(x) y_j\psi_1(x,y)+\(X_j\psi_1\)(x,y)+\psi_1(x,y)f(x,y)\]
     \,d\sigma_x.
\end{aligned}
\end{equation*}
By Lemma \ref{L:sff} and \cite[Theorem 7.7.14]{MR1065993},
there exists constants $C_1$ and $C_2$
such that
\begin{equation*}
\begin{aligned}  
&\;
\abs{\int_{S_x} a_{kj}(x,y)e^{\pi i (x+2t)\cdot y}\psi_1(x,y)\,d\sigma_x}
\leq
C_1 {\norm{x+2t}}^{(1-n)/2}, \quad \text{and}\\
&\;
\abs{\int_{S_x}  e^{\pi i (x+2t)\cdot y}
\[\pi i\tau(x) y_j\psi_1(x,y)+\(X_j\psi_1\)(x,y)+\psi_1(x,y)f(x,y)\]\,d\sigma_x}
\leq
C_2 {\norm{x+2t}}^{(1-n)/2},
\end{aligned}
\end{equation*}
for large enough $t$.  Therefore
\begin{equation*}
\abs{\frac{\partial}{\partial x_j} \int_{S_x}
e^{\pi i (x+2t)\cdot y} \psi_1(x,y)\,d\sigma_x}
\leq
C_3 {\norm{x+2t}}^{(3-n)/2},
\end{equation*}  
for large enough $t$.  Similarly,
\begin{equation*}
\abs{\frac{\partial^2}{\partial {x_j}^2} \int_{S_x}
e^{\pi i (x+2t)\cdot y} \psi_1(x,y)\,d\sigma_x}
\leq
C_4 {\norm{x+2t}}^{(5-n)/2},
\end{equation*}
for large enough $t$.  Therefore
\begin{equation*}
\abs{\Delta_x \int_{S_x}
e^{\pi i (x+2t)\cdot y} \psi_1(x,y)\,d\sigma_x}
\leq
C_5 {\norm{x+2t}}^{(5-n)/2},
\end{equation*}
for large enough $t$.  It follows that
\begin{equation*}
\abs{\Delta_t k(t,u)}\leq C_5 {\norm{t+u}}^{(5-n)/2},
\end{equation*}
for large enough $t$.  Therefore, by Equation (\ref{eq:kernel}) and
Lemma \ref{L:conv},
\begin{equation*}
\begin{aligned}
\int \abs{(-\Delta_t + \norm{t}^2)k(t,u)}\,du 
&\; \leq C_5 \int{\norm{t+u}}^{(5-n)/2}\,du
    + C \norm{t}^2 \int {\norm{t+u}}^{(1-n)/2}\,du\\
&\; \leq C_6 {\norm{t}}^{(5-n)/2} 
\end{aligned}
\end{equation*}
for large enough $t$.  Hence, when $n \geq 6$, we have
\begin{equation*}
\lim_{\norm{t} \rightarrow \infty} \int \abs{(-\Delta_t + \norm{t}^2)k(t,u)}\,du  = 0.
\end{equation*}
Similarly, when $n \geq 6$, we have
\begin{equation*}
\lim_{\norm{u} \rightarrow \infty} \int \abs{(-\Delta_t + \norm{t}^2)k(t,u)}\,dt = 0.
\end{equation*}
Therefore $H(W(\mu))$ is a compact operator when $n \geq 6$.
Since $S^p(\H)$ is an ideal of $\BH$, it follows from Equation (\ref{eq:ho})
that 
\begin{equation*}
W(\mu) \in S^p(\H)
\end{equation*}
for $n \geq 6$ and $p > n$.

This completes the proof of Theorem \ref{T:main-thm} under Assumption
\ref{A:assumption}.

Now we consider the general case.

\begin{lem}\label{L:sub}
Let $w \in S$.  Then either $\Pi_1$ or $\Pi_2$ is a submersion at $w$.
\end{lem}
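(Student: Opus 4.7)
The plan is a pure linear-algebra argument on the tangent space $T_w S$, exploiting only that $\dim T_w S = 2n-1$ and that the two coordinate projections, viewed as linear maps on the ambient $\R^{2n}$, have complementary kernels
\begin{equation*}
\ker d\Pi_1 = \{0\}\times\R^n,\qquad \ker d\Pi_2 = \R^n\times\{0\}.
\end{equation*}

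First I would suppose, toward contradiction, that $\Pi_1$ fails to be a submersion at $w$.  Then $d\Pi_1|_{T_w S}$ has rank at most $n-1$, so by rank--nullity
\begin{equation*}
\dim\bigl(T_w S \cap (\{0\}\times\R^n)\bigr) \geq (2n-1)-(n-1) = n.
\end{equation*}
Since this intersection sits inside the $n$-dimensional subspace $\{0\}\times\R^n$, equality must hold, and hence $\{0\}\times\R^n \subseteq T_w S$.  The restriction of $d\Pi_2$ to $\{0\}\times\R^n$ is an isomorphism onto $\R^n$, so $d\Pi_2|_{T_w S}$ is already surjective, i.e.\ $\Pi_2$ is a submersion at $w$.

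Equivalently, if $N$ denotes a unit normal vector to $S$ at $w$, then $T_w S = N^\perp$ and $\Pi_i$ fails to be a submersion at $w$ precisely when $N \in \ker d\Pi_i$; simultaneous failure would force $N$ to lie in both $\R^n\times\{0\}$ and $\{0\}\times\R^n$, hence $N=0$, contradicting $\norm{N}=1$.  There is no real obstacle in this step; the dichotomy is forced by dimension counting alone, and the curvature or convexity hypotheses on $S$ play no role here (they enter only later, when this lemma is used to stitch together the two cases that complement Assumption \ref{A:assumption}).
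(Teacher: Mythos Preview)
Your proof is correct and follows essentially the same rank--nullity argument as the paper: both observe that if $d\Pi_1|_{T_wS}$ and $d\Pi_2|_{T_wS}$ each had rank at most $n-1$, their kernels (which together lie in the $(2n-1)$-dimensional space $T_wS$) would have to intersect nontrivially, contradicting the injectivity of $d\Pi_1\oplus d\Pi_2$ on $T_wS$. One minor slip in your alternative normal-vector formulation: $\Pi_i$ fails to be a submersion at $w$ exactly when $N\perp\ker d\Pi_i$ (equivalently $N\in\ker d\Pi_j$ for $j\neq i$), not when $N\in\ker d\Pi_i$; the conclusion is unaffected, since simultaneous failure still places $N$ in both coordinate subspaces and hence forces $N=0$.
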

\begin{proof}
Let us assume that $d\Pi_1$ and $d\Pi_2$ are not surjective i.e.,
\begin{equation*}
\operatorname{Rank}(d\Pi_1) \leq n-1
\quad \text{and} \quad
\operatorname{Rank}(d\Pi_2) \leq n-1.
\end{equation*}
Therefore, by the rank-nullity theorem,
\begin{equation*}
\operatorname{Nullity}(d\Pi_1) \geq n
\quad \text{and} \quad
\operatorname{Nullity}(d\Pi_2) \geq n.
\end{equation*}
It follows that the intersection of the null-spaces $d\Pi_1$ and
$d\Pi_2$ is nonzero.  This contradicts the fact that
$d\Pi_1 \oplus d\Pi_2$ is injective.
\end{proof}

Let $U_2$ be the set of regular points of $\Pi_2$.
By Lemma \ref{L:sub}, $S = U_1 \cup U_2$.
Let $\{\rho_1, \rho_2\}$ be a partition of unity subordinate to $\{U_1, U_2\}$.

Then the measure $\p_1\mu$ satisfies Assumption \ref{A:assumption}, and
hence $W(\p_1\mu)$ satisfies the conclusions of Theorem \ref{T:main-thm}.

Define $J: \R^{2n} \to \R^{2n}$ by
\begin{equation*}
J(x,y) = (-y,x).
\end{equation*}

Then $JS$ is also a compact connected hypersurface in $\R^{2n}$, $n \geq 2$,
whose Gaussian curvature is positive everywhere, and
$J_*(\p_2\mu)$ is a smooth measure on $JS$, which in addition satisfies
Assumption \ref{A:assumption}.  Therefore $W(J_*(\p_2\mu))$ 
satisfies the conclusions of Theorem \ref{T:main-thm}.

Let $\mathcal{F}$ denote the Fourier transform on $\H$.  It follows
from Equation (\ref{E:schrodinger}) that
\begin{equation*}
\mathcal{F} \rho(x,y,1) = \rho(-y,x,1) \mathcal{F}.
\end{equation*}
Therefore, by Equation (\ref{E:weyl}),
\begin{equation*}
W(\rho_2 \mu) = \mathcal{F}^{-1} W(J_*(\rho_2 \mu)) \mathcal{F},
\end{equation*}
and so $W(\rho_2 \mu)$ satisfies the conclusions of Theorem \ref{T:main-thm}.
Since $W(\mu)=W(\rho_1 \mu) + W(\rho_2 \mu)$, Theorem \ref{T:main-thm}
is proved.

\section{An application}\label{S:application}
There is an action of $\R^{2n}$ on $S^p(\H)$ called {\em quantum translation}
which is defined by
\begin{equation*}
(x_1,y_1) \cdot A = \rho (x_1,y_1,1) A {\rho (x_1,y_1,1)}^{-1} .
\end{equation*}
The motivation for this definition may be found in \cite[p28]{MR1228140}.
This action plays an important role in the works
\cite{MR1228140, MR2434197, MR3628015}.

It follows from Equation (\ref{E:weyl}) that for any finite measure
$\lambda$ on $\R^{2n}$,
\begin{equation}\label{E:weyl2}
(x_1, y_1)\cdot W(\lambda)=W(e((x_1,y_1),\cdot) \lambda),
\end{equation}
%% The fundamental fact about this action on $S^2(\H)$ is that
%% (see \cite[Lemma 3.9]{MR2434197})
%% \begin{equation} \label{eq:alpha}
%% \alpha((x_1,y_1) \cdot A)(x,y) = e((x_1,y_1),(x,y))\alpha(A)(x,y),
%% \end{equation}
where $e: \R^{2n} \times \R^{2n} \to \C$ is the map
\begin{equation*}
e((x',y'),(x,y))=e^{2\pi i(x'\cdot y- y'\cdot x)}.
\end{equation*}
By the Plancherel theorem for $W$, this relation continues to hold when
$\lambda$ is replaced by a function $f\in L^2(\R^{2n})$.

%% Let $(x_1,y_1), \dots, (x_k,y_k)$ be distinct elements of $\R^{2n}$ and
%% let $c_1,\dots, c_k$ be nonzero scalars.
%% Define the difference operator $D$ on $S^p(\H)$ by
%% \begin{equation*}
%% DA = c_1 (x_1,y_1) \cdot A + \dots + c_k (x_k,y_k) \cdot A.
%% \end{equation*}

\begin{lem}
Let $A \in S^p(\H)$, $1\le p \le 2$, be a non-zero operator and let
$(x_1,y_1), \dots, (x_k,y_k)$ be distinct elements of $\R^{2n}$.  Then
$\{(x_1,y_1) \cdot A, \dots, (x_k,y_k) \cdot A\}$ is a linearly independent set.
\end{lem}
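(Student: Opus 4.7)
The plan is to reduce linear dependence of quantum translates of $A$ to a multiplicative identity in $L^2(\R^{2n})$ via the Plancherel theorem for the Weyl transform. Since $1 \le p \le 2$ implies $S^p(\H) \subseteq S^2(\H)$, the operator $A$ lies in $S^2(\H)$, so the Plancherel theorem furnishes a non-zero $f \in L^2(\R^{2n})$ with $W(f) = A$.

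Suppose, toward a contradiction, that $\sum_{j=1}^k c_j\,(x_j,y_j)\cdot A = 0$ with some $c_j \neq 0$. Let $P(x,y) = \sum_{j=1}^k c_j\, e^{2\pi i (x_j \cdot y - y_j \cdot x)}$. Applying the $L^2$-extension of Equation (\ref{E:weyl2}) remarked on just after that equation, the hypothesis rewrites as $W(fP) = 0$. By the injectivity of $W$ on $L^2(\R^{2n})$, the product $fP$ vanishes almost everywhere.

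It therefore suffices to show that $P \neq 0$ almost everywhere. Since the points $(x_j,y_j)$ are distinct, the associated characters $(x,y) \mapsto e^{2\pi i (x_j \cdot y - y_j \cdot x)}$ of $\R^{2n}$ are distinct and hence linearly independent, so $P$ is not the zero function. But $P$ extends to an entire function on $\C^{2n}$ and is in particular real-analytic on $\R^{2n}$, so its zero set has Lebesgue measure zero. Consequently $f = 0$ almost everywhere, contradicting $A \neq 0$.

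The argument is essentially bookkeeping once the Plancherel theorem for $W$ is in hand; the only substantive input is the elementary observation that a non-trivial trigonometric polynomial vanishes only on a set of measure zero. The restriction $p \le 2$ is used solely to produce an $L^2$ preimage of $A$ under $W$, which is precisely the threshold past which part (2) of the abstract asserts the conclusion fails.
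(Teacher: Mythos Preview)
Your proof is correct and follows essentially the same route as the paper: reduce to $p=2$ via the Schatten inclusion, invoke the Plancherel theorem for $W$ to write $A=W(f)$, use Equation~(\ref{E:weyl2}) to rewrite the dependence relation as $W(fP)=0$, and then exploit real-analyticity of the trigonometric polynomial $P$ together with linear independence of distinct characters. The only cosmetic difference is that the paper argues directly (from $fP=0$ and $f\neq 0$ on a set of positive measure deduce $P\equiv 0$, hence all $c_j=0$) whereas you argue by contradiction (assume some $c_j\neq 0$, deduce $P\neq 0$ a.e., hence $f=0$).
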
  

\begin{proof}
Since $S^p(\H) \subseteq S^2(\H)$ for $1\le p \le 2$, it suffices to prove
the result for $p=2$.

Let $c_1,\dots, c_k \in \C$ be such that
\begin{equation*}
c_1 (x_1,y_1) \cdot A + \dots + c_k (x_k,y_k) \cdot A = 0.
\end{equation*}

By the Plancherel theorem for $W$, we may write $A=W(f)$ for some nonzero
$f\in L^2(\R^{2n})$. 
By Equation (\ref{E:weyl2}), 
\begin{equation*}
c_1 (x_1,y_1) \cdot A + \dots + c_k (x_k,y_k) \cdot A
= W\((c_1 e((x_1,y_1),\cdot)+ \dots + c_k e((x_k,y_k),\cdot)) f\).
\end{equation*}
Therefore $(c_1 e((x_1,y_1),\cdot)+ \dots + c_k e((x_k,y_k),\cdot)) f=0$.
Since $f$ is a non-zero function in $L^2(\R^{2n})$, there exists $\e >0$
such that $\abs{f} > \e$ on a set of positive measure.
Therefore the analytic function
\begin{equation*}
c_1 e((x_1,y_1),(x,y))+ \dots + c_k e((x_k,y_k),(x,y)) = 0
\end{equation*}
on a set of positive measure and thus it is identically zero.
Since 
$\{e((x_1,y_1),\cdot), \dots, e((x_k,y_k),\cdot)\}$ is linearly independent,
it follows that $c_1 = \dots = c_k = 0$.
\end{proof}  
%% \begin{lem}
%% Let $A \in S^p(\H)$ and let $T_A \in \S'(\H)$ be the map
%% $T_A(X) =\tr(XA)$.
%% Then
%% \begin{equation*}
%% \alpha(T_{(x_1,y_1) \cdot A})(\psi) =
%% e((x_1,y_1),\cdot) \alpha(T_A)(\psi).
%% \end{equation*}
%% \end{lem}
%% \begin{proof}
%% Let $\psi \in \S(\R^{2n})$.
%% \begin{equation*}
%% \begin{split}
%% \alpha(T_{(x_1,y_1) \cdot A})(\psi)
%% & =
%% T_{(x_1,y_1) \cdot A} (W(R\psi))\\
%% & =
%% \tr(W(R\psi)(x_1,y_1) \cdot A)\\
%% & =
%% \tr(W(R\psi)\rho (x_1,y_1,1) A {\rho (x_1,y_1,1)}^{-1})\\
%% & =
%% \tr\(\iint \psi(-x,-y) \rho(x,y,1) \rho (x_1,y_1,1) A
%% {\rho (x_1,y_1,1)}^{-1}\,dx \,dy\)\\
%% & =
%% \tr\(\iint e((x,y),(x_1,y_1)) \psi(-x,-y)\rho (x_1,y_1,1) \rho(x,y,1) A
%% {\rho (x_1,y_1,1)}^{-1}\,dx \,dy\)\\
%% & =
%% \tr\(\iint e((x,y),(x_1,y_1)) \psi(-x,-y) \rho(x,y,1) A \,dx \,dy\)\\
%% & =
%% \tr\(W\(R(e((x_1,y_1), \cdot) \psi)\)A\)\\
%% & =
%% \alpha(T_A)(e((x_1,y_1),\cdot) \psi)\\
%% & =
%% e((x_1,y_1),\cdot) \alpha(T_A)(\psi).
%% \end{split}
%% \end{equation*}
%% \end{proof}
%% Therefore 
%% \begin{equation*}
%% \alpha(T_{DA})(\psi)
%% = 
%% \(c_1 e((x_1,y_1),(x,y))+ \dots + c_k e((x_k,y_k),(x,y))\)
%% \alpha(T_A)(\psi(x,y)).
%% \end{equation*}

The remainder of the argument is taken from Edgar and
Rosenblatt \cite{MR536947}, which we give here for completeness.

Let $(x_1,y_1), \dots, (x_k,y_k)$ be distinct elements of $\R^{2n}$ and
let $c_1,\dots, c_k$ be nonzero scalars.
Define the difference operator $D$ on $S^p(\H)$ by
\begin{equation*}
DA = c_1 (x_1,y_1) \cdot A + \dots + c_k (x_k,y_k) \cdot A.
\end{equation*}
The trigonometric polynomial 
\begin{equation*}
c_1 e\((x_1,y_1),(x,y)\) + \dots + c_k e\((x_k,y_k),(x,y)\)
\end{equation*}
is called the characteristic trigonometric polynomial of the
difference equation $DA = 0$.

Consider the difference equation given by
\begin{equation}\label{eq:de}
2(2n-1) A = (1,0,\dots,0)\cdot A + (-1,0,\dots,0)\cdot A
+ \dots + (0,\dots, 0,1)\cdot A +(0,\dots, 0,-1)\cdot A.
\end{equation}
Then the characteristic trigonometric polynomial
\begin{equation*}
p(x_1, \dots, x_n, y_1, \dots, y_n) = 2(2n-1) - 2\sum_{j=1}^n \(\cos(2 \pi x_j) +
\cos(2\pi y_j)\)
\end{equation*}
has a zero set which is a disjoint union of compact $(2n-1)$-dimensional
surfaces of positive Gaussian curvature.  Let $S_{2n}$ be the connected component
of the zero set containing the points with all coordinates zero except for one
which is $\pm 1/4$.  Let $\sigma$ be the measure on $S_{2n}$
induced by the Lebesgue measure on $\R^{2n}$.
Then by Theorem \ref{T:main-thm}, $W(\sigma) \in S^p(\H)$ for $p>n$ and
$n \geq6$.
It follows from Equation (\ref{E:weyl2}) that $A=W(\sigma)$ is a
%HERE
non-zero solution of the difference equation (\ref{eq:de}).
Hence, for $p>n$ and $n \geq6$, there exists an operator in $S^p(\H)$ whose
quantum translates are linearly dependent.

\section*{Appendix}
We prove Lemma \ref{L:sff} here.
We need another technical lemma.

\begin{lem}\label{L:technical}
Let $M$ be a compact connected hypersurface in $\R^m$ with positive Gaussian
curvature.  Let $H$ be a plane in $\R^m$ and suppose $H$ intersects $M$
transversally.  Let $N=M\cap H$.  Then $N$ is a
hypersurface in $H$, and for all $y\in N$ and $X, Y \in T_yN$,
we have
\begin{equation*}
K_{(N \subseteq H)}(X,Y) \geq K_{(M \subseteq \R^m)}(X,Y),
\end{equation*}
where $K_{(N \subseteq H)}$ and $K_{(M \subseteq \R^m)}$ are the second
fundamental forms with respect to the inward pointing normal
on the hypersurfaces $N$ and $M$ respectively.
\end{lem}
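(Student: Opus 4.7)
The approach is to exploit that $H$ is a flat affine subspace: any curve in $N\subseteq H$ has its second derivative inside the linear direction $H_0$ of $H$, so comparing the two second fundamental forms reduces to expressing the inward unit normal $\nu_N$ to $N$ inside $H$ in terms of the inward unit normal $\nu_M$ to $M$ inside $\R^m$.

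First I would fix $y\in N$, write $H_0$ for the linear part of $H$, and let $P\colon \R^m \to H_0$ be orthogonal projection. Since $T_yN = T_yM \cap H_0$ and $\nu_M \perp T_yM$, for every $v\in T_yN$ we have $\<P\nu_M,v\> = \<\nu_M,v\> = 0$, so $P\nu_M$ is normal to $T_yN$ within $H_0$. Transversality forces $P\nu_M\neq 0$: if $\nu_M\perp H_0$ then $H_0\subseteq T_yM$, which would force $T_yN = H_0$ and violate codimension one. Strict convexity of $M$ (already noted in the paper as a consequence of positive Gaussian curvature) says $\nu_M$ points into the convex body $B$ bounded by $M$, and since $\<P\nu_M,\nu_M\> = \norm{P\nu_M}^2 > 0$, moving from $y$ within $H$ in the direction $P\nu_M$ enters $B\cap H$, i.e.\ the convex region bounded by $N$ in $H$. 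Hence $\nu_N = P\nu_M/\norm{P\nu_M}$.

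Second, for $X\in T_yN$ I would pick a unit-speed smooth curve $\gamma$ in $N$ with $\gamma'(0)=X$. Because $\gamma$ lies in the affine subspace $H$, $\gamma''(0)\in H_0$, and because $\gamma$ also lies in $M$, the definitions of the two second fundamental forms give
\begin{equation*}
K_{(M\subseteq \R^m)}(X,X) = \<\gamma''(0),\nu_M\> \quad\text{and}\quad K_{(N\subseteq H)}(X,X) = \<\gamma''(0),\nu_N\>.
\end{equation*}
Since $\gamma''(0)\in H_0$,
\begin{equation*}
\<\gamma''(0),\nu_M\> = \<\gamma''(0),P\nu_M\> = \norm{P\nu_M}\,\<\gamma''(0),\nu_N\>,
\end{equation*}
so $K_{(M)}(X,X) = \norm{P\nu_M}\,K_{(N)}(X,X)$. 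With $\norm{P\nu_M}\leq 1$ and $K_{(M)}(X,X)>0$ from positive Gaussian curvature of $M$, this yields $K_{(N)}(X,X)\geq K_{(M)}(X,X)$. The inequality for the full symmetric bilinear form (equivalently, positive semidefiniteness of $K_{(N)} - K_{(M)}\big|_{T_yN}$) follows by polarization, because on $T_yN$ the two forms differ only by the scalar factor $1/\norm{P\nu_M}\geq 1$.

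The only step that requires any care is pinning down the orientation of $\nu_N$, i.e.\ verifying that $P\nu_M$ points into the convex region bounded by $N$ in $H$ rather than away from it; this is exactly where the strict convexity of $M$ (hence the positive Gaussian curvature hypothesis) is essential. Everything else is a direct calculation.
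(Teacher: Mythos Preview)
Your argument is correct and lands on the same identity as the paper, namely
\[
K_{(N\subseteq H)}(X,X)=\frac{K_{(M\subseteq\R^m)}(X,X)}{\vec n_N\cdot\vec n_M},
\qquad \vec n_N\cdot\vec n_M=\norm{P\nu_M}\in(0,1].
\]
The paper reaches this via the decomposition of the vector-valued second fundamental form for the nested inclusions $N\subseteq H\subseteq\R^m$ and $N\subseteq M\subseteq\R^m$ (using that $H$ is totally geodesic), whereas you get it by a more elementary curve computation together with the projection formula $\nu_N=P\nu_M/\norm{P\nu_M}$; these are equivalent packagings of the same geometry. Your treatment of the orientation of $\nu_N$ is in fact more explicit than the paper's one-line ``observe that $\vec n_N\cdot\vec n_M>0$.''

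One small quibble: the final sentence about ``polarization'' is unnecessary and a bit misleading. You have already shown that on $T_yN$ the two forms satisfy $K_N=(1/\norm{P\nu_M})\,K_M$, so $K_N-K_M$ is a nonnegative scalar multiple of the positive-definite form $K_M$ and hence positive semidefinite; that is the content needed downstream. Note that the literal off-diagonal inequality $K_N(X,Y)\ge K_M(X,Y)$ can fail when $K_M(X,Y)<0$, a harmless imprecision in the lemma's statement that the paper's own final line shares.
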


\begin{proof}
By the Jordan-Brower separation theorem, $M=\partial\Omega$
where $\Omega$ is a bounded open set in $\R^m$.
Let $\vec{n}_M$ denote the unit normal to $M$ which points into $\Omega$.
Let $N=M\cap H$.  It is a standard fact from differential topology that $N$
is a smooth hypersurface in $H$. (see \cite[p30]{MR0348781}).
Let $\vec{n}_N$ denote the
unit normal vector to $N$ which points into $\Omega\cap H$.  Observe that
$\vec{n}_N\cdot\vec{n}_M>0$.

If $W$ is a submanifold of the Riemannian manifold $Z$, let
$\vec{K}_{(W \subseteq Z)}$ denote the vector-valued second fundamental form of
$W$ in $Z$, i.e.
\begin{equation*}
\vec{K}_{(W \subseteq Z)}(X,Y)=\operatorname{Proj}_{TZ \ominus TW} \nabla_X Y,
\end{equation*}
where $\nabla$ is the Levi-Civita connection on $Z$ and
$TZ \ominus TW$ denotes the orthogonal complement of the tangent bundle of
$W$ in the tangent bundle of $Z$.  Moreover, if $\vec n$ is a unit
normal vector field along $W$, we may define the second fundamental
form of $W$ with respect to $\vec n$ by
\begin{equation*}
K_{(W\subseteq Z)}=\vec{K}_{(W\subseteq Z)}\cdot \vec{n}.
\end{equation*}

%% Let $\vec{K}_{M \subseteq \R^m}$ denote the vector-valued second fundamental form
%% of $M$ in $\R^m$, i.e.,
%% \begin{equation*}
%% \vec{K}_{M \subseteq \R^m}(X,Y)=\operatorname{Proj}_{T\R^m \ominus TM} \bar{\nabla}_XY,
%% \end{equation*}
%% where $\bar{\nabla}$ is the Levi-Civita connection on $\R^m$ and
%% $T\R^m \ominus TM$ denotes the orthogonal complement of the tangent bundle of
%% $M$ in the tangent bundle of $\R^m$.
%% Recall that
%% if $Y=(y_1, \dots, y_m)$ is a vector field on $\R^m$ and $X$ is a vector, then
%% \begin{equation*}
%% \bar{\nabla}_XY = (Xy_1, \dots, Xy_m).
%% \end{equation*}

%%   If $W$ is a hypersurface
%% in $\R^m$, then it has a scalar second fundamental form $K_{W \subseteq \R^m}$ given
%% by
%% \begin{equation*}
%% K_{W \subseteq \R^m}(X,Y) = \vec{K}_{W \subseteq \R^m}(X,Y) \cdot \vec{n}_W,
%% \end{equation*}
%% where $\vec{n}_W$ is the unit normal vector to $W$.

Let $x\in N$ and let $X, Y \in T_xN$.  We have orthogonal decompositions
\begin{equation*}
\begin{aligned}
\vec{K}_{(N \subseteq \R^m)}(X,Y)
=&\; \vec{K}_{(N \subseteq H)}(X,Y) + \vec{K}_{(H \subseteq \R^m)}(X,Y),
\qquad\text{and}\\
\vec{K}_{(N \subseteq \R^m)}(X,Y)
=&\; \vec{K}_{(N \subseteq M)}(X,Y) + \vec{K}_{(M \subseteq \R^m)}(X,Y).
\end{aligned}
\end{equation*}
Since $\vec{K}_{(H \subseteq \R^m)}(X,Y)=0$, it follows that
\begin{equation*}
\vec{K}_{(N \subseteq H)}(X,Y)
=
%% \vec{K}_{(N \subseteq \R^m)}(X,Y)
%% =
\vec{K}_{(N \subseteq M)}(X,Y) + \vec{K}_{(M \subseteq \R^m)}(X,Y)
\end{equation*}

%% Therefore, by taking dot product of both sides with $\vec{n}_M$, we find
Since $\vec{K}_{(N \subseteq M)}(X,Y)$ is tangent to $M$, it follows that
\begin{equation*}
\begin{aligned}
K_{(N \subseteq H)}(X,Y) \vec{n}_N \cdot \vec{n}_M
=&\; \vec{K}_{(N \subseteq H)}(X,Y) \cdot \vec{n}_M\\
=&\; \vec{K}_{(M \subseteq \R^m)}(X,Y) \cdot \vec{n}_M\\
=&\; K_{(M \subseteq \R^m)}(X,Y).
\end{aligned}
\end{equation*}
Therefore
\begin{equation*}
K_{(N \subseteq H)}(X,Y)
=
\frac{K_{(M \subseteq \R^m)}(X,Y)}{ \vec{n}_M\cdot \vec{n}_N}
\geq
K_{(M \subseteq \R^m)}(X,Y).
\end{equation*}
\end{proof}

Taking $H$ to be the plane $\{(x,y) \st y \in \R^n\}$ in
Lemma \ref{L:technical}, we see that $S_x$ is a smooth
hypersurface in $H$ with 
$K_{(S_x \subseteq H)}(X,Y) \geq K_{(S \subseteq \R^{2n})}(X,Y)$.
Since $\Pi_2$ is an isometry between $H$ and $\R^n$,
it follows that $\Pi_2(S_x)$ is a hypersurface in $\R^n$
with
$K_{(\Pi_2(S_x) \subseteq \R^n)}(X,Y) \geq K_{(S \subseteq \R^{2n})}(X,Y)$.
Therefore the smallest principal curvature of $\Pi_2(S_x)$ is
greater than or equal to the smallest principal curvature of
$S$.  Therefore the Gaussian curvature of $S_x$ is bounded below
by the $(n-1)$-th power of the smallest principal curvature of
$S$.

\bibliographystyle{amsplain}
\bibliography{v21-weyltransform}

\end{document}